\documentclass{elsart}%
\usepackage{amsfonts}
\usepackage{amsmath}
\usepackage{amssymb}
\usepackage{graphicx}%
\setcounter{MaxMatrixCols}{30}
\providecommand{\U}[1]{\protect\rule{.1in}{.1in}}
\textheight 23cm \textwidth 17cm \leftmargin 0.10in \rightmargin
0.10in \topmargin -.50in \headsep .7in \evensidemargin 0.10in
\oddsidemargin 0.10in
\newtheorem{theorem}{Theorem}

\newenvironment{proof}[1][Proof]{\noindent\textbf{#1.} }{\ \rule{0.5em}{0.5em}}
\begin{document}
%
\begin{frontmatter}%
%

\title{Difference equations of q-Appell polynomials}%
%

\author{Nazim I. Mahmudov}%
%

\address{Eastern Mediterranean University,
Department of Mathematics
Gazimagusa, TRNC, Mersin 10, Turkey \\
Email: nazim.mahmudov@emu.edu.tr}%
%

\maketitle
%

\begin{abstract}
In this paper, we study some properties of the $q$-Appell polynomials,
including the recurrence relations and the $q$-difference equations which
extend some known calssical ($q=1$) results. We also provide the recurrence
relations and the $q$-difference equations for $q$-Bernoulli polynomials,
$q$-Euler polynomials, $q$-Genocchi polynomials and for newly defined
$q$-Hermite polynomials$,$ as special cases of $q$-Appell polynomials.%
\end{abstract}%
%

\begin{keyword}%
$q$-Appell polynomials, Lowering operators, $q$-derivative, $q$-Bernoulli
polynomials, $q$-Euler polynomials, $q$-Genocchi polynomials, $q$-Hermite
polynomials%
\end{keyword}%
%

\end{frontmatter}%

\section{Introduction}

He and Ricci \cite{He} obtained the differential equations of the Appell
polynomials via the factorization method. Moreover, they found differential
equations satisfied by Bernoulli and Euler polynomials as a special case.
Afterward, Da-Qian Lu found differential equations for generalized Bernoulli
polynomials in \cite{lu}. Recently, several interesting properties and
relationships involving the classical Appell type polynomials were
investigated \cite{ricci2}-\cite{simsek}.

The proof given by He and Ricci used the factorization method, which based on
raising and lowering operators techniques. Note that the raising operators are
not available for general polynomials, although lowering operators always
exist. The proof of the main results given here for $q$-Appell polynomials
does not use raising operators.

In this paper, we derive $q$-difference equations for $q$-Appell polynomials
$A_{n,q}\left(  x\right)  $ defined in Al-Salam \cite{sa1}. As special cases
of $q$-Appell polynomials, we also provide the $q$-difference equations for
$q$-Bernoulli polynomials $B_{n,q}\left(  x\right)  $, $q$-Euler polynomials
$E_{n,q}\left(  x\right)  $, $q$-Genocchi polynomials and for newly defined
$q$-Hermite polynomials $H_{n,q}\left(  x\right)  $.

We briefly recall some of the properties of these polynomials. The Appell
polynomials can be defined by considering the following generating function:%
\begin{equation}
A\left(  x,t\right)  :=A_{q}\left(  t\right)  e_{q}\left(  tx\right)
=\sum_{n=0}^{\infty}A_{n,q}\left(  x\right)  \dfrac{t^{n}}{\left[  n\right]
_{q}!},\ \ 0<q<1, \label{ap1}%
\end{equation}
where%
\[
A_{q}\left(  t\right)  :=\sum_{n=0}^{\infty}A_{n,q}\dfrac{t^{n}}{\left[
n\right]  _{q}!},\ \ \ A\left(  0\right)  \neq0,\ \ \
\]
is analytic function at $t=0$ , and $A_{n,q}:=A_{n,q}\left(  0\right)  ,$ and
$e_{q}\left(  t\right)  =\sum_{n=0}^{\infty}\dfrac{t^{n}}{\left[  n\right]
_{q}!}.$

Differentiating generation equation (\ref{ap1}) with respect to $x$ and
equating coefficients of $t^{n},$ we obtain%
\[
D_{q,x}A_{n,q}\left(  x\right)  =\left[  n\right]  _{q}A_{n-1,q}\left(
x\right)  .
\]
Then the lowering operator $\Phi_{n}=\dfrac{1}{\left[  n\right]  _{q}}D_{q,x}$
satisfies the following operational relation:%
\[
\Phi_{n}A_{n,q}\left(  x\right)  =A_{n-1,q}\left(  x\right)  .
\]
It follows that%
\begin{equation}
A_{n-k,q}\left(  x\right)  =\left(  \Phi_{n-k}\circ...\circ\Phi_{n}\right)
A_{n,q}\left(  x\right)  =\frac{\left[  n-k\right]  _{q}!}{\left[  n\right]
_{q}!}D_{q,x}^{k}A_{n,q}\left(  x\right)  . \label{app6}%
\end{equation}

\section{ Recursion formulas and $q$-difference equations}

In this section, we derive a difference equation for the $q$-Appell
polynomials $A_{n,q}\left(  x\right)  $ and give the recurrence relations and
difference equations for the $q$-Appell polynomials.

\begin{theorem}
\label{Thm:a1}The following linear homogeneous recurrence relation for the
$q$-Appell polynomials holds true:%
\begin{align*}
A_{n,q}\left(  qx\right)   &  =\frac{1}{\left[  n\right]  _{q}}\sum_{k=0}%
^{n}\left[
\begin{array}
[c]{c}%
n\\
k
\end{array}
\right]  _{q}\alpha_{n-k}q^{k}A_{k,q}\left(  x\right)  +xq^{n}A_{n-1,q}\left(
x\right) \\
&  =\frac{1}{\left[  n\right]  _{q}}\alpha_{0}q^{n}A_{n,q}\left(  x\right)
+q^{n}\left(  x+\alpha_{1}q^{-1}\right)  A_{n-1,q}\left(  x\right)  +\frac
{1}{\left[  n\right]  _{q}}\sum_{k=0}^{n-2}\left[
\begin{array}
[c]{c}%
n\\
k
\end{array}
\right]  _{q}\alpha_{n-k}q^{k}A_{k,q}\left(  x\right)  .
\end{align*}

\end{theorem}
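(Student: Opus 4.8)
The plan is to pass to generating functions in the variable $t$ and read off the coefficient of $t^{n}/[n]_{q}!$. The key input is the relation that defines the numbers $\alpha_{j}$, namely
\[
t\,D_{q,t}A_{q}\left(t\right)=A_{q}\left(qt\right)\sum_{j=0}^{\infty}\alpha_{j}\frac{t^{j}}{\left[j\right]_{q}!};
\]
write $\alpha\left(t\right):=\sum_{j\geq0}\alpha_{j}t^{j}/[j]_{q}!$ for the generating function of the $\alpha_{j}$. The first observation is that the asserted identity is a statement about the single generating function $A_{q}\left(t\right)e_{q}\left(qtx\right)=\sum_{n}A_{n,q}\left(qx\right)t^{n}/[n]_{q}!$, obtained from (\ref{ap1}) by replacing $x$ with $qx$, since its $n$-th coefficient is exactly the left-hand side of the theorem.

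First I would multiply the claimed recurrence through by $[n]_{q}$, so that the left-hand side becomes $[n]_{q}A_{n,q}\left(qx\right)$, which is the coefficient of $t^{n}/[n]_{q}!$ in $t\,D_{q,t}\left[A_{q}\left(t\right)e_{q}\left(qtx\right)\right]$. It therefore suffices to prove the single master identity
\[
t\,D_{q,t}\left[A_{q}\left(t\right)e_{q}\left(qtx\right)\right]=\alpha\left(t\right)A_{q}\left(qt\right)e_{q}\left(qtx\right)+q\,x\,t\,A_{q}\left(qt\right)e_{q}\left(qtx\right),
\]
after which extracting the coefficient of $t^{n}/[n]_{q}!$ and dividing by $[n]_{q}$ gives the theorem. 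Here three coefficient extractions must be checked: the left side yields $[n]_{q}A_{n,q}\left(qx\right)$; the first right-hand term is the $q$-Cauchy product of $A_{q}\left(qt\right)e_{q}\left(qtx\right)=\sum_{m}q^{m}A_{m,q}\left(x\right)t^{m}/[m]_{q}!$ with $\alpha\left(t\right)$, which produces $\sum_{k=0}^{n}\left[\begin{array}{c}n\\k\end{array}\right]_{q}q^{k}A_{k,q}\left(x\right)\alpha_{n-k}$ and explains the weights $q^{k}$; and the second term contributes $[n]_{q}xq^{n}A_{n-1,q}\left(x\right)$.

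To establish the master identity I would apply the $q$-Leibniz rule in the form $D_{q,t}\left(fg\right)=f\left(qt\right)D_{q,t}g+g\,D_{q,t}f$ with $f=A_{q}$ and $g\left(t\right)=e_{q}\left(qtx\right)$, together with $D_{q,t}e_{q}\left(qtx\right)=qx\,e_{q}\left(qtx\right)$ (as a function of $t$ the $q$-exponential has rate $qx$, which is the source of the extra factor $q$). This gives
\[
D_{q,t}\left[A_{q}\left(t\right)e_{q}\left(qtx\right)\right]=A_{q}\left(qt\right)q\,x\,e_{q}\left(qtx\right)+e_{q}\left(qtx\right)D_{q,t}A_{q}\left(t\right).
\]
Multiplying by $t$ and subtracting the term $q\,x\,t\,A_{q}\left(qt\right)e_{q}\left(qtx\right)$ shows that the master identity is equivalent to $t\,e_{q}\left(qtx\right)D_{q,t}A_{q}\left(t\right)=\alpha\left(t\right)A_{q}\left(qt\right)e_{q}\left(qtx\right)$; cancelling the common nonzero factor $e_{q}\left(qtx\right)$ leaves precisely the defining relation $t\,D_{q,t}A_{q}\left(t\right)=A_{q}\left(qt\right)\alpha\left(t\right)$, so the identity holds.

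The main obstacle is bookkeeping of the $q$-shifts rather than any deep idea. One must choose to expand $A_{q}\left(t\right)e_{q}\left(qtx\right)$ (not $A_{q}\left(t\right)e_{q}\left(tx\right)$), use the branch of the $q$-product rule that places the shift $A_{q}\left(t\right)\mapsto A_{q}\left(qt\right)$ on the factor matching the $q^{k}$-weighted sum, and track the extra $q$ coming out of $D_{q,t}e_{q}\left(qtx\right)$. Once these shifts are aligned, the term $q\,x\,A_{q}\left(qt\right)e_{q}\left(qtx\right)$ produced by differentiating $e_{q}\left(qtx\right)$ cancels exactly against the $xq^{n}A_{n-1,q}\left(x\right)$ term of the recurrence, and what remains is the definition of the $\alpha_{j}$. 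The second, regrouped form in the statement then follows immediately by splitting off the $k=n$ and $k=n-1$ summands and using $\left[\begin{array}{c}n\\n\end{array}\right]_{q}=1$ and $\left[\begin{array}{c}n\\n-1\end{array}\right]_{q}=[n]_{q}$.
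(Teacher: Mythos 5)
Your proof is correct and follows essentially the same route as the paper: both apply $t\,D_{q,t}$ to the generating function $A_{q}(t)e_{q}(qtx)$, use the $q$-Leibniz rule with the shift placed on $A_{q}$, invoke the defining relation $t\,D_{q,t}A_{q}(t)=A_{q}(qt)\alpha(t)$, and equate coefficients of $t^{n}/[n]_{q}!$ (the paper records this as formula (\ref{app5}) inside the proof of Theorem \ref{Thm:a2}). Your "master identity" is exactly the paper's equation (\ref{ap2}) before coefficient extraction, so the two arguments coincide.
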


\begin{proof}
See formula (\ref{app5}) in the proof of Theorem \ref{Thm:a2}.
\end{proof}

\begin{theorem}
\label{Thm:a2}Assume that%
\begin{equation}
t\dfrac{D_{q,t}A_{q}\left(  t\right)  }{A_{q}\left(  qt\right)  }=\sum
_{n=0}^{\infty}\alpha_{n}\dfrac{t^{n}}{\left[  n\right]  _{q}!}. \label{ass1}%
\end{equation}
The $q$-Appell polynomials $A_{n,q}\left(  x\right)  $ satisfy the
$q$-difference equation%
\begin{gather*}
\dfrac{\alpha_{n}}{\left[  n\right]  _{q}!}D_{q,x}^{n}A_{n,q}\left(  x\right)
+\dfrac{q\alpha_{n-1}}{\left[  n-1\right]  _{q}!}D_{q,x}^{n-1}A_{n,q}\left(
x\right)  +...+\dfrac{q^{n-2}\alpha_{2}}{\left[  2\right]  _{q}!}D_{q,x}%
^{2}A_{n,q}\left(  x\right) \\
+\dfrac{q^{n-1}\alpha_{1}}{\left[  1\right]  _{q}!}D_{q,x}A_{n,q}\left(
x\right)  +\dfrac{q^{n}\alpha_{0}}{\left[  0\right]  _{q}!}A_{n,q}\left(
x\right)  +xq^{n}D_{q,x}A_{n,q}\left(  x\right)  -\left[  n\right]
_{q}A_{n,q}\left(  qx\right)  =0.
\end{gather*}

\end{theorem}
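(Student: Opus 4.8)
The plan is to prove Theorem~\ref{Thm:a2} by first establishing the recurrence of Theorem~\ref{Thm:a1} at the level of generating functions, and then converting that recurrence into the $q$-difference equation by means of the lowering-operator identity (\ref{app6}).

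First I would record two elementary facts about the $q$-derivative in the variable $t$: the eigenfunction relation $D_{q,t}e_q(at)=a\,e_q(at)$, valid for every constant $a$, and the $q$-product rule $D_{q,t}[f(t)g(t)]=f(qt)\,D_{q,t}g(t)+g(t)\,D_{q,t}f(t)$. Because the desired recurrence is stated for $A_{n,q}(qx)$, whose generating function is $A(qx,t)=A_q(t)\,e_q(qtx)$, I would apply these two facts to the product $A_q(t)\,e_q(qtx)$, taking $a=qx$, which gives $D_{q,t}[A_q(t)e_q(qtx)]=qx\,A_q(qt)e_q(qtx)+e_q(qtx)\,D_{q,t}A_q(t)$. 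Multiplying by $t$ and using the hypothesis (\ref{ass1}) rewritten as $t\,D_{q,t}A_q(t)=A_q(qt)\sum_{m\ge0}\alpha_m t^m/[m]_q!$ to eliminate $D_{q,t}A_q(t)$, I arrive at the closed identity
\[
 t\,D_{q,t}\bigl[A_q(t)e_q(qtx)\bigr]=A_q(qt)e_q(qtx)\Bigl[\,qxt+\sum_{m=0}^{\infty}\alpha_m\tfrac{t^m}{[m]_q!}\,\Bigr].
\]

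Next I would extract the coefficient of $t^n/[n]_q!$ on each side. On the left, $t\,D_{q,t}A(qx,t)$ contributes $[n]_q A_{n,q}(qx)$. On the right, $A_q(qt)e_q(qtx)=A(x,qt)=\sum_k q^k A_{k,q}(x)\,t^k/[k]_q!$, so its Cauchy product with $\sum_m\alpha_m t^m/[m]_q!$, together with $\binom{n}{k}_q=[n]_q!/([k]_q![n-k]_q!)$, yields $\sum_{k=0}^n\binom{n}{k}_q\alpha_{n-k}q^k A_{k,q}(x)$, while the term $qxt$ contributes $[n]_q x q^n A_{n-1,q}(x)$. Dividing by $[n]_q$ produces
\begin{equation}
A_{n,q}(qx)=\frac{1}{[n]_q}\sum_{k=0}^{n}\binom{n}{k}_q\alpha_{n-k}q^k A_{k,q}(x)+xq^n A_{n-1,q}(x), \label{app5}
\end{equation}
which is exactly Theorem~\ref{Thm:a1} (its second form following by splitting off the $k=n$ and $k=n-1$ terms and using $\binom{n}{n}_q=1$, $\binom{n}{n-1}_q=[n]_q$). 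To complete Theorem~\ref{Thm:a2} I would then substitute (\ref{app6}) in the form $A_{k,q}(x)=\tfrac{[k]_q!}{[n]_q!}D_{q,x}^{\,n-k}A_{n,q}(x)$ and $A_{n-1,q}(x)=\tfrac{1}{[n]_q}D_{q,x}A_{n,q}(x)$ into (\ref{app5}), simplify with $\binom{n}{k}_q[k]_q!/[n]_q!=1/[n-k]_q!$, reindex by $j=n-k$, and multiply through by $[n]_q$; this reproduces the stated equation term by term.

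I expect the only genuinely delicate point to be the bookkeeping in the second paragraph: one must use the form of the $q$-product rule that produces the factor $A_q(qt)$ rather than $A_q(t)$, so that both terms share the common factor $A_q(qt)$ after the substitution from (\ref{ass1}), permitting the clean factorization in the displayed identity; and one must track the powers of $q$ carefully, since the weight $q^k$ enters through the coefficients of $A(x,qt)$ and must survive the Cauchy product to give $\alpha_{n-k}q^k$. Everything following the displayed generating-function identity is routine coefficient extraction and substitution.
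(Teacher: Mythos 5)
Your proposal is correct and follows essentially the same route as the paper: $q$-differentiate the generating function $A_q(t)e_q(qtx)$ in $t$, use the $q$-product rule and hypothesis (\ref{ass1}) to obtain the factorized identity, extract coefficients to get the recurrence (\ref{app5}) of Theorem \ref{Thm:a1}, and then substitute (\ref{app6}); your indexing $\alpha_{n-k}q^k A_{k,q}(x)$ is just the reindexed form of the paper's $\alpha_k q^{n-k}A_{n-k,q}(x)$.
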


\begin{proof}
Differentiating generating equation%
\begin{equation}
A_{q}\left(  qx,t\right)  =A_{q}\left(  t\right)  e_{q}\left(  tqx\right)
=\sum_{n=0}^{\infty}A_{n,q}\left(  qx\right)  \dfrac{t^{n}}{\left[  n\right]
_{q}!} \label{ap4}%
\end{equation}
with respect to $t$ and multiplying the obtained equality by $t,$ we get the
following two equations%
\begin{align*}
tD_{q,t}A_{q}\left(  qx,t\right)   &  =D_{q,t}\left(  A_{q}\left(  t\right)
e_{q}\left(  tqx\right)  \right)  =\left(  D_{q,t}A_{q}\left(  t\right)
\right)  e_{q}\left(  tqx\right)  +qxA_{q}\left(  qt\right)  e_{q}\left(
tqx\right) \\
&  =A_{q}\left(  x,qt\right)  \left[  t\dfrac{D_{q,t}A_{q}\left(  t\right)
}{A_{q}\left(  qt\right)  }+tqx\right] \\
tD_{q,t}A_{q}\left(  qx,t\right)   &  =t\sum_{n=0}^{\infty}\left[  n\right]
_{q}A_{n,q}\left(  qx\right)  \dfrac{t^{n-1}}{\left[  n\right]  _{q}!}%
=\sum_{n=0}^{\infty}\left[  n\right]  _{q}A_{n,q}\left(  qx\right)
\dfrac{t^{n}}{\left[  n\right]  _{q}!}.
\end{align*}
Now from the assumption (\ref{ass1}) it follows that%
\begin{align}
\sum_{n=0}^{\infty}\left[  n\right]  _{q}A_{n,q}\left(  qx\right)
\dfrac{t^{n}}{\left[  n\right]  _{q}!}  &  =A_{q}\left(  x,qt\right)  \left[
t\dfrac{D_{q,t}A_{q}\left(  t\right)  }{A_{q}\left(  qt\right)  }+tqx\right]
\nonumber\\
&  =\sum_{n=0}^{\infty}q^{n}A_{n,q}\left(  x\right)  \dfrac{t^{n}}{\left[
n\right]  _{q}!}\left[  \sum_{n=0}^{\infty}\alpha_{n}\dfrac{t^{n}}{\left[
n\right]  _{q}!}+tqx\right] \nonumber\\
&  =\sum_{n=0}^{\infty}\sum_{k=0}^{n}\left[
\begin{array}
[c]{c}%
n\\
k
\end{array}
\right]  _{q}\alpha_{k}q^{n-k}A_{n-k,q}\left(  x\right)  \dfrac{t^{n}}{\left[
n\right]  _{q}!}+x\sum_{n=0}^{\infty}q^{n+1}A_{n,q}\left(  x\right)
\dfrac{t^{n+1}}{\left[  n\right]  _{q}!}. \label{ap2}%
\end{align}
Equating coefficients of $t^{n}$ in equation (\ref{ap2}), we obtain%
\begin{equation}
\left[  n\right]  _{q}A_{n,q}\left(  qx\right)  =\sum_{k=0}^{n}\left[
\begin{array}
[c]{c}%
n\\
k
\end{array}
\right]  _{q}\alpha_{k}q^{n-k}A_{n-k,q}\left(  x\right)  +x\left[  n\right]
_{q}q^{n}A_{n-1,q}\left(  x\right)  . \label{app5}%
\end{equation}
Inserting (\ref{app6}) into (\ref{app5}) we get%
\begin{align*}
\left[  n\right]  _{q}A_{n,q}\left(  qx\right)   &  =\sum_{k=0}^{n}\left[
\begin{array}
[c]{c}%
n\\
k
\end{array}
\right]  _{q}\alpha_{k}q^{n-k}\frac{\left[  n-k\right]  _{q}!}{\left[
n\right]  _{q}!}D_{q,x}^{k}A_{n,q}\left(  x\right)  +x\left[  n\right]
_{q}q^{n}\frac{\left[  n-1\right]  _{q}!}{\left[  n\right]  _{q}!}%
D_{q,x}A_{n,q}\left(  x\right) \\
&  =\sum_{k=0}^{n}\frac{q^{n-k}}{\left[  k\right]  _{q}!}\alpha_{k}D_{q,x}%
^{k}A_{n,q}\left(  x\right)  +xq^{n}D_{q,x}A_{n,q}\left(  x\right) \\
&  =\left(  \sum_{k=0}^{n}\frac{q^{n-k}}{\left[  k\right]  _{q}!}\alpha
_{k}D_{q,x}^{k}+xq^{n}D_{q,x}\right)  A_{n,q}\left(  x\right)  .
\end{align*}

\end{proof}

\section{$q$-Bernoulli polynomials}

The Bernoulli polynomials $B_{n,q}\left(  x\right)  $ are defined (see
\cite{sa2}, \cite{mah1}) starting from the generating function:%
\[
B_{q}\left(  x,t\right)  :=\frac{t}{e_{q}\left(  t\right)  -1}e_{q}\left(
tx\right)  =\sum_{n=0}^{\infty}B_{n,q}\left(  x\right)  \dfrac{t^{n}}{\left[
n\right]  _{q}!},\ \ \ \left\vert t\right\vert <2\pi,
\]
and consequently, the Bernoulli numbers $b_{n,q}:=B_{n,q}\left(  0\right)  $
can be obtained by the generating function:%
\[
B_{q}\left(  t\right)  :=\frac{t}{e_{q}\left(  t\right)  -1}=\sum
_{n=0}^{\infty}b_{n,q}\dfrac{t^{n}}{\left[  n\right]  _{q}!}.
\]

\begin{theorem}
\label{Thm:b1}The following linear homogeneous recurrence relation for the
$q$-Bernoulli polynomials holds true:%
\[
B_{n,q}\left(  qx\right)  =q^{n}\left(  x-\frac{1}{q\left[  2\right]  _{q}%
}\right)  B_{n-1,q}\left(  x\right)  -\frac{1}{\left[  n\right]  _{q}}%
\sum_{k=0}^{n-2}\left[
\begin{array}
[c]{c}%
n\\
k
\end{array}
\right]  _{q}q^{k-1}b_{n-k,q}B_{k,q}\left(  x\right)
\]

\end{theorem}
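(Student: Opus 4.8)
The plan is to realize the $q$-Bernoulli polynomials as the special case of the $q$-Appell polynomials corresponding to $A_q(t)=B_q(t)=t/(e_q(t)-1)$, so that $A_{n,q}=b_{n,q}$, and then simply specialize the recurrence of Theorem \ref{Thm:a1}. Everything therefore reduces to identifying the coefficients $\alpha_n$ attached to this choice of $A_q$, that is, the $q$-Taylor coefficients of $t\,D_{q,t}B_q(t)/B_q(qt)$ in (\ref{ass1}), and then substituting them into the second displayed form of Theorem \ref{Thm:a1}.

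To compute the $\alpha_n$ I would start from the functional equation $B_q(t)(e_q(t)-1)=t$. Applying $D_{q,t}$ to both sides, using the $q$-Leibniz rule in the form $D_{q,t}(fg)(t)=g(qt)D_{q,t}f(t)+f(t)D_{q,t}g(t)$ together with the identity $D_{q,t}e_q(t)=e_q(t)$, gives $(e_q(qt)-1)D_{q,t}B_q(t)+B_q(t)e_q(t)=1$, whence $D_{q,t}B_q(t)=(1-B_q(t)e_q(t))/(e_q(qt)-1)$. Dividing by $B_q(qt)=qt/(e_q(qt)-1)$ and multiplying by $t$, the factor $e_q(qt)-1$ cancels and I obtain the clean expression
\[
t\frac{D_{q,t}B_q(t)}{B_q(qt)}=\frac{1}{q}\bigl(1-e_q(t)B_q(t)\bigr).
\]

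The last step is to expand $e_q(t)B_q(t)$. Since $B_q(t)e_q(tx)=\sum_n B_{n,q}(x)t^n/[n]_q!$, setting $x=1$ yields $e_q(t)B_q(t)=\sum_n B_{n,q}(1)t^n/[n]_q!$, and the functional equation $B_q(t)(e_q(t)-1)=t$ forces $B_{n,q}(1)-b_{n,q}=\delta_{n,1}$. Feeding this in and using $b_{0,q}=1$ and (from a direct expansion of the generating function) $b_{1,q}=-1/[2]_q$, I read off $\alpha_0=0$, $\alpha_1=-1/[2]_q$, and $\alpha_n=-q^{-1}b_{n,q}$ for $n\ge2$. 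Substituting these into the second form of Theorem \ref{Thm:a1} then gives the result directly: the $\alpha_0$-term vanishes, the $\alpha_1$-term becomes $q^n(x-1/(q[2]_q))B_{n-1,q}(x)$, and in the sum $\alpha_{n-k}q^{k}=-q^{k-1}b_{n-k,q}$, producing exactly the claimed identity.

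The one genuinely delicate point is the index $n=1$: the value $\alpha_1$ does \emph{not} obey the uniform rule $\alpha_j=-q^{-1}b_{j,q}$ that governs $j\ge2$, precisely because of the extra $\delta_{n,1}$ arising from $B_{1,q}(1)\neq b_{1,q}$. Keeping careful track of this anomalous term — together with the exact form of the $q$-Leibniz rule and the cancellation of the factor $e_q(qt)-1$ — is where the computation must be executed with care; once $\alpha_0$ and $\alpha_1$ are isolated and the general $\alpha_n$ identified, the conclusion follows by direct substitution into Theorem \ref{Thm:a1}.
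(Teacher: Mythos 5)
Your proposal is correct and follows exactly the route the paper intends (the paper states Theorem \ref{Thm:b1} without proof, leaving it as the specialization of Theorem \ref{Thm:a1} to $A_q(t)=t/(e_q(t)-1)$). Your computation of the coefficients --- $\alpha_0=0$, $\alpha_1=-1/[2]_q$ via the anomalous $B_{1,q}(1)=b_{1,q}+1$, and $\alpha_n=-q^{-1}b_{n,q}$ for $n\ge 2$ --- checks out and correctly fills in the omitted details.
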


\begin{theorem}
\label{Thm:b2}The $q$-Bernoulli polynomials $B_{k,q}\left(  x\right)  $
satisfy the $q$-difference equation%
\begin{gather*}
\dfrac{b_{n,q}}{q\left[  n\right]  _{q}!}D_{q,x}^{n}B_{n,q}\left(  x\right)
+\dfrac{b_{n-1,q}}{\left[  n-1\right]  _{q}!}D_{q,x}^{n-1}B_{n,q}\left(
x\right)  +...+q^{n-3}\dfrac{b_{2,q}}{\left[  2\right]  _{q}!}D_{q,x}%
^{2}B_{n,q}\left(  x\right) \\
-q^{n}\left(  x-\frac{1}{q\left[  2\right]  _{q}}\right)  D_{q,x}%
B_{n,q}\left(  x\right)  +\left[  n\right]  _{q}B_{n,q}\left(  qx\right)  =0.
\end{gather*}

\end{theorem}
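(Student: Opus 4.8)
The plan is to observe that the $q$-Bernoulli polynomials are exactly the $q$-Appell polynomials generated by $A_q(t)=B_q(t)=t/(e_q(t)-1)$, so that $A_{n,q}=b_{n,q}$, and then to invoke Theorem \ref{Thm:a2}. The whole argument reduces to computing the sequence $\{\alpha_n\}$ defined by the assumption (\ref{ass1}), i.e. to expanding $t\,D_{q,t}B_q(t)/B_q(qt)$ as a $q$-exponential series. Once these $\alpha_n$ are known explicitly in terms of the $q$-Bernoulli numbers $b_{n,q}$, the asserted $q$-difference equation follows by direct substitution into the equation of Theorem \ref{Thm:a2} and routine bookkeeping.

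The central computation is the evaluation of $t\,D_{q,t}B_q(t)/B_q(qt)$. First I would $q$-differentiate the defining relation $B_q(t)\big(e_q(t)-1\big)=t$ using the $q$-Leibniz rule $D_q(fg)(t)=f(t)D_q g(t)+g(qt)D_q f(t)$ together with $D_{q,t}e_q(t)=e_q(t)$; this gives $B_q(t)e_q(t)+\big(e_q(qt)-1\big)D_{q,t}B_q(t)=1$. The key simplifications are the identity $B_q(t)e_q(t)=t+B_q(t)$, immediate from $B_q(t)=t/(e_q(t)-1)$, and the relation $e_q(qt)-1=qt/B_q(qt)$, which is just the defining relation read at the argument $qt$. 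Combining these yields the closed form
\[
t\,\frac{D_{q,t}B_q(t)}{B_q(qt)}=\frac{1}{q}\Big(1-t-B_q(t)\Big)=\frac{1}{q}\Big(1-t-\sum_{n=0}^{\infty}b_{n,q}\frac{t^n}{[n]_q!}\Big).
\]
Reading off the coefficients of $t^n/[n]_q!$ and using $b_{0,q}=1$ and $b_{1,q}=-1/[2]_q$, I obtain $\alpha_0=0$, $\alpha_1=-1/[2]_q$, and $\alpha_n=-b_{n,q}/q$ for all $n\ge 2$.

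It then remains to substitute these values into the $q$-difference equation of Theorem \ref{Thm:a2}. Since $\alpha_0=0$, the zeroth-order term disappears. The $k=1$ term, carrying the coefficient $q^{n-1}\alpha_1=-q^{n-1}/[2]_q$, combines with the separate first-order term $xq^n D_{q,x}B_{n,q}(x)$ to give $q^n\big(x-\tfrac{1}{q[2]_q}\big)D_{q,x}B_{n,q}(x)$. For each $k$ with $2\le k\le n$ the coefficient $q^{n-k}\alpha_k/[k]_q!$ equals $-q^{n-k-1}b_{k,q}/[k]_q!$. Multiplying the whole identity by $-1$ (so that the final term reads $+[n]_q B_{n,q}(qx)$, as in the statement) turns these into the coefficients $b_{n,q}/(q[n]_q!)$, $b_{n-1,q}/[n-1]_q!$, \dots, $q^{n-3}b_{2,q}/[2]_q!$ of $D_{q,x}^n, D_{q,x}^{n-1}, \dots, D_{q,x}^2$ respectively, and the first-order term into $-q^n\big(x-\tfrac{1}{q[2]_q}\big)D_{q,x}B_{n,q}(x)$, recovering exactly the asserted equation.

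The only genuine difficulty is the presence of $B_q(qt)$ in the denominator of (\ref{ass1}); I expect this to be the main obstacle, and it is precisely what the functional identities $B_q(t)e_q(t)=t+B_q(t)$ and $e_q(qt)-1=qt/B_q(qt)$ are designed to dissolve, collapsing the quotient into the simple series above. After that, the remaining steps are a purely mechanical collection of coefficients.
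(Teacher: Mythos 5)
Your proposal is correct and follows the route the paper intends: Theorem \ref{Thm:b2} is stated as a specialization of Theorem \ref{Thm:a2} with $A_q(t)=t/(e_q(t)-1)$, and your values $\alpha_0=0$, $\alpha_1=-1/[2]_q$, $\alpha_k=-b_{k,q}/q$ for $k\ge 2$ are exactly what is needed (they also reproduce the recurrence of Theorem \ref{Thm:b1}). The paper omits the computation of the $\alpha_n$ entirely; your derivation via $q$-differentiating $B_q(t)(e_q(t)-1)=t$ and the identities $B_q(t)e_q(t)=t+B_q(t)$, $e_q(qt)-1=qt/B_q(qt)$ is a clean and valid way to supply it.
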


\section{$q$-Euler polynomials}

The Euler numbers $e_{n,q}$ can be defined by the generating function%
\[
E_{q}\left(  t\right)  :=\frac{te_{q}\left(  t\right)  }{e_{q}\left(
2t\right)  -1}=\sum_{n=0}^{\infty}e_{n,q}\dfrac{t^{n}}{\left[  n\right]
_{q}!}.
\]
The Euler polynomials $E_{n,q}\left(  x\right)  $ (see \cite{mah1}) can be
defined by the generating function%
\[
E_{q}\left(  x,t\right)  :=\frac{2}{e_{q}\left(  t\right)  +1}e_{q}\left(
tx\right)  =\sum_{n=0}^{\infty}E_{n,q}\left(  x\right)  \dfrac{t^{n}}{\left[
n\right]  _{q}!},\ \ \ \left\vert t\right\vert <\pi.
\]
The connection to the Euler numbers is given by%
\[
e_{n,q}=2^{n}E_{n,q}\left(  \frac{1}{2}\right)  .
\]

\begin{theorem}
\label{Thm:e1}The following linear homogeneous recurrence relation for the
$q$-Euler polynomials holds true:%
\[
E_{n,q}\left(  qx\right)  =\frac{1}{2}\sum_{k=0}^{n-1}\left[
\begin{array}
[c]{c}%
n-1\\
k
\end{array}
\right]  _{q}q^{k}E_{n-k-1}E_{k,q}\left(  x\right)  +xq^{n}E_{n-1,q}\left(
x\right)  .
\]

\end{theorem}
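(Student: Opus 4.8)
The plan is to recognize the $q$-Euler polynomials as the $q$-Appell family generated by $A_q(t)=\tfrac{2}{e_q(t)+1}$ and to invoke Theorem \ref{Thm:a1} directly, so the whole problem reduces to computing the structure coefficients $\alpha_n$ of (\ref{ass1}) for this particular $A_q$. First I would evaluate $t\,D_{q,t}A_q(t)/A_q(qt)$. Using the two facts $t\,D_{q,t}f(t)=\tfrac{f(qt)-f(t)}{q-1}$ and $D_{q,t}e_q(t)=e_q(t)$ (the latter giving $e_q(qt)-e_q(t)=(q-1)t\,e_q(t)$), a short manipulation collapses the quotient:
\[
t\frac{D_{q,t}A_q(t)}{A_q(qt)}=\frac{1}{q-1}\Bigl(1-\frac{e_q(qt)+1}{e_q(t)+1}\Bigr)=\frac{-t\,e_q(t)}{e_q(t)+1}=-t+\frac{t}{2}\sum_{n=0}^{\infty}E_{n,q}(0)\frac{t^n}{[n]_q!},
\]
where in the last step I used $\tfrac{e_q(t)}{e_q(t)+1}=1-\tfrac12\cdot\tfrac{2}{e_q(t)+1}$ and the fact that $\tfrac{2}{e_q(t)+1}$ is the generating function of the numbers $E_{n,q}(0)$. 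Reading off the coefficient of $\tfrac{t^n}{[n]_q!}$ then gives $\alpha_0=0$ and $\alpha_n=\tfrac12[n]_qE_{n-1,q}(0)$ for $n\ge1$, the lone exception being the extra $-1$ that the isolated term $-t$ deposits into $\alpha_1$.

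With the $\alpha_n$ in hand I would substitute them into the recurrence of Theorem \ref{Thm:a1}, namely $E_{n,q}(qx)=\tfrac{1}{[n]_q}\sum_{k=0}^{n}\binom{n}{k}_q\alpha_{n-k}q^kE_{k,q}(x)+xq^nE_{n-1,q}(x)$. The $k=n$ term drops out because $\alpha_0=0$, and for $0\le k\le n-1$ the crucial simplification is the $q$-binomial identity $\binom{n}{k}_q[n-k]_q=[n]_q\binom{n-1}{k}_q$, which cancels the prefactor $\tfrac{1}{[n]_q}$ against the $[n-k]_q$ hidden inside $\alpha_{n-k}$ and simultaneously lowers the upper index from $n$ to $n-1$. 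After this cancellation the principal part of the sum becomes exactly $\tfrac12\sum_{k=0}^{n-1}\binom{n-1}{k}_q q^k E_{n-k-1,q}(0)E_{k,q}(x)$, and the $x$-term of the recurrence supplies $xq^nE_{n-1,q}(x)$, which together reproduce the asserted shape of the identity.

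The step that demands the most care, and which I expect to be the main obstacle, is the bookkeeping of the boundary contribution produced by the isolated $-t$. Since that term affects only $\alpha_1$, it enters the sum solely through the slot $k=n-1$, where $\binom{n}{n-1}_q=[n]_q$, and therefore contributes a clean multiple of $E_{n-1,q}(x)$ that must be combined with the explicit $xq^nE_{n-1,q}(x)$ before the final form is read off. I would pin down its precise coefficient by testing the smallest case: directly expanding $\tfrac{2}{e_q(t)+1}e_q(tx)$ gives $E_{1,q}(x)=x-\tfrac12$, hence $E_{1,q}(qx)=qx-\tfrac12$, which fixes the constant and serves as the consistency check that the boundary term has been carried correctly. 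Everything else in the argument is the routine $q$-binomial manipulation described above.
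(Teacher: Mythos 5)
Your route is exactly the intended one: the paper gives no separate proof of Theorem \ref{Thm:e1}, which is meant to be read off from Theorem \ref{Thm:a1} (formula (\ref{app5})) specialized to $A_{q}(t)=2/(e_{q}(t)+1)$, and your computation of $t\,D_{q,t}A_{q}(t)/A_{q}(qt)=-te_{q}(t)/(e_{q}(t)+1)$ and of the resulting coefficients $\alpha_{0}=0$, $\alpha_{m}=\tfrac12[m]_{q}E_{m-1,q}(0)$ (with the extra $-1$ in $\alpha_{1}$) is correct, as is the identity $\left[\begin{smallmatrix}n\\k\end{smallmatrix}\right]_{q}[n-k]_{q}=[n]_{q}\left[\begin{smallmatrix}n-1\\k\end{smallmatrix}\right]_{q}$ used to lower the upper index. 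One substantive point: carrying out the boundary bookkeeping you flag, the isolated $-1$ in $\alpha_{1}$ enters at $k=n-1$ and contributes $-q^{n-1}E_{n-1,q}(x)$, so the identity your argument actually produces is
\[
E_{n,q}\left(qx\right)=\frac{1}{2}\sum_{k=0}^{n-2}\left[\begin{smallmatrix}n-1\\k\end{smallmatrix}\right]_{q}q^{k}E_{n-k-1,q}\left(0\right)E_{k,q}\left(x\right)-\frac{1}{2}q^{n-1}E_{n-1,q}\left(x\right)+xq^{n}E_{n-1,q}\left(x\right),
\]
that is, the $k=n-1$ summand in the printed statement should carry the coefficient $-\tfrac12q^{n-1}$ rather than $+\tfrac12q^{n-1}E_{0}$. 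The $n=1$ test you propose detects this immediately: the printed right-hand side gives $qx+\tfrac12$, whereas $E_{1,q}(qx)=qx-\tfrac12$. The corrected form is also the one consistent with the paper's own Theorem \ref{Thm:e2}, whose first-order term is $-\tfrac12q^{n-1}D_{q,x}E_{n,q}(x)$. So your strategy and its execution are sound; only the final reading-off must respect the boundary term, and the statement as printed does not.
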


\begin{theorem}
\label{Thm:e2}The $q$-Euler polynomials $B_{k,q}\left(  x\right)  $ satisfy
the $q$-difference equation%
\begin{gather*}
\frac{1}{2}\dfrac{e_{n-1,q}}{\left[  n-1\right]  _{q}!}D_{q,x}^{n}%
E_{n,q}\left(  x\right)  +\frac{1}{2}\dfrac{qe_{n-2,q}}{\left[  n-2\right]
_{q}!}D_{q,x}^{n-1}E_{n,q}\left(  x\right)  +...+\frac{1}{2}\dfrac
{q^{n-2}e_{1,q}}{\left[  2\right]  _{q}!}D_{q,x}^{2}E_{n,q}\left(  x\right) \\
-\dfrac{1}{2}q^{n-1}D_{q,x}E_{n,q}\left(  x\right)  +xq^{n}D_{q,x}%
E_{n,q}\left(  x\right)  -\left[  n\right]  _{q}E_{n,q}\left(  qx\right)  =0.
\end{gather*}

\end{theorem}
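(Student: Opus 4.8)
The plan is to derive Theorem \ref{Thm:e2} as the specialization of the general $q$-difference equation of Theorem \ref{Thm:a2} to the $q$-Euler case, in which the Appell factor is $A_{q}(t)=\frac{2}{e_{q}(t)+1}$. Everything hinges on computing the sequence $\alpha_{n}$ attached to this particular $A_{q}(t)$ through the defining relation (\ref{ass1}) and then expressing these $\alpha_{n}$ in terms of the $q$-Euler numbers $e_{n,q}$; once the $\alpha_{n}$ are known, the equation follows by substitution.

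First I would evaluate $\Phi(t):=t\,\dfrac{D_{q,t}A_{q}(t)}{A_{q}(qt)}$ in closed form. Using $t\,D_{q,t}f(t)=\frac{f(qt)-f(t)}{q-1}$ together with $D_{q,t}e_{q}(t)=e_{q}(t)$ (so that $e_{q}(t)-e_{q}(qt)=-(q-1)t\,e_{q}(t)$) and $\frac{A_{q}(t)}{A_{q}(qt)}=\frac{e_{q}(qt)+1}{e_{q}(t)+1}$, the quotient collapses to $\Phi(t)=-\frac{t\,e_{q}(t)}{e_{q}(t)+1}=-\frac{t}{2}\,e_{q}(t)A_{q}(t)$. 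Expanding $e_{q}(t)A_{q}(t)$ as a generating series and shifting the summation index by one reads off the $\alpha_{n}$; in particular it makes $\alpha_{0}=0$ manifest, and exhibits each $\alpha_{n}$ for $n\ge1$ as a constant multiple of $[n]_{q}$ times a $q$-Euler number. Equivalently, and more directly, I would extract the $\alpha_{n}$ by matching the recurrence of Theorem \ref{Thm:e1} against the general recurrence (\ref{app5}): comparing $q$-binomial coefficients via $\frac{1}{[n]_{q}}\left[\begin{array}[c]{c}n\\k\end{array}\right]_{q}=\frac{1}{[n-k]_{q}}\left[\begin{array}[c]{c}n-1\\k\end{array}\right]_{q}$ identifies $\alpha_{k}=\tfrac{1}{2}[k]_{q}\,e_{k-1,q}$ for $k\ge1$ and forces the upper summation limit $n-1$ in Theorem \ref{Thm:e1}, which is exactly the statement $\alpha_{0}=0$.

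With these $\alpha_{k}$ in hand I would substitute into the operator form of the $q$-difference equation of Theorem \ref{Thm:a2}. The factor $\frac{[k]_{q}}{[k]_{q}!}=\frac{1}{[k-1]_{q}!}$ simplifies each term of $\sum_{k}\frac{q^{n-k}}{[k]_{q}!}\alpha_{k}D_{q,x}^{k}$, and because $\alpha_{0}=0$ the zero-order term $\frac{q^{n}\alpha_{0}}{[0]_{q}!}E_{n,q}(x)$ drops out. The top two terms ($k=n$ and $k=n-1$) become $\frac{e_{n-1,q}}{2[n-1]_{q}!}D_{q,x}^{n}E_{n,q}(x)$ and $\frac{q\,e_{n-2,q}}{2[n-2]_{q}!}D_{q,x}^{n-1}E_{n,q}(x)$, the $k=1$ term yields the first-order contribution $-\tfrac{1}{2}q^{n-1}D_{q,x}E_{n,q}(x)$, while the separate terms $x q^{n}D_{q,x}E_{n,q}(x)$ and $-[n]_{q}E_{n,q}(qx)$ carry over verbatim, producing the displayed equation.

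The main obstacle I anticipate is the bookkeeping of constants and signs coming from the $q$-Euler numbers. The generating function for $e_{n,q}$ must be reconciled with the Appell factor $A_{q}(t)=\frac{2}{e_{q}(t)+1}$, and the low-order values, especially $e_{0,q}$ (which fixes the coefficient of the first-order term $-\tfrac{1}{2}q^{n-1}D_{q,x}E_{n,q}(x)$) and the normalization of $\alpha_{1}$, have to be pinned down carefully from $\Phi(t)$; a sign slip there propagates through every coefficient. Once $\alpha_{0}=0$ and the value of $\alpha_{1}$ are settled, the remainder is the same index-shift and term collection already executed in the proof of Theorem \ref{Thm:a2}.
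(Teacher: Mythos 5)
Your proposal is correct and follows exactly the route the paper intends: Theorem \ref{Thm:e2} is stated without proof as the specialization of Theorem \ref{Thm:a2} to $A_{q}(t)=\frac{2}{e_{q}(t)+1}$, and your computation $t\,D_{q,t}A_{q}(t)/A_{q}(qt)=-\,t\,e_{q}(t)/(e_{q}(t)+1)$, yielding $\alpha_{0}=0$ and $\alpha_{k}=\tfrac{1}{2}[k]_{q}e_{k-1,q}$ for $k\geq 1$ and then substituting into the operator identity of Theorem \ref{Thm:a2}, is precisely the intended derivation. Your caution about normalization is also well placed: the coefficients of Theorems \ref{Thm:e1} and \ref{Thm:e2} are consistent only with $e_{0,q}=-1$, i.e.\ with $\sum_{n}e_{n,q}t^{n}/[n]_{q}!=-2e_{q}(t)/(e_{q}(t)+1)$ rather than the generating function the paper displays for $e_{n,q}$, and the $[2]_{q}!$ in the $D_{q,x}^{2}$ term of the stated theorem should read $[1]_{q}!$ to match the pattern $\frac{q^{n-k}e_{k-1,q}}{2[k-1]_{q}!}D_{q,x}^{k}$ that your substitution correctly produces.
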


\section{$q$-Genocchi polynomials}

The $q$-Genocchi numbers $g_{n,q}$ can be defined by the generating function%
\[
G_{q}\left(  t\right)  :=\frac{2t}{e_{q}\left(  t\right)  +1}=\sum
_{n=0}^{\infty}g_{n,q}\dfrac{t^{n}}{\left[  n\right]  _{q}!}.
\]
The $q$-Genocchi polynomials $G_{n,q}\left(  x\right)  $ (see \cite{mah1}) can
be defined by the generating function%
\[
G_{q}\left(  x,t\right)  :=\frac{2t}{e_{q}\left(  t\right)  +1}e_{q}\left(
tx\right)  =\sum_{n=0}^{\infty}G_{n,q}\left(  x\right)  \dfrac{t^{n}}{\left[
n\right]  _{q}!},\ \ \ \left\vert t\right\vert <\pi.
\]

\begin{theorem}
\label{Thm:g1}The following linear homogeneous recurrence relation for the
$q$-Genocchi polynomials holds true:%
\[
\frac{1}{2q}\sum_{k=0}^{n-2}\left[
\begin{array}
[c]{c}%
n\\
k
\end{array}
\right]  _{q}g_{n-k,q}q^{k}G_{k,q}\left(  x\right)  +\left[  n\right]
_{q}\left(  xq-\frac{1}{2q}\right)  q^{n-1}G_{n-1,q}\left(  x\right)
+q^{n-1}G_{n,q}\left(  x\right)  -\left[  n\right]  _{q}G_{n,q}\left(
qx\right)  =0.
\]

\end{theorem}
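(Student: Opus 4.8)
The plan is to specialize the general machinery of Theorem \ref{Thm:a2}, and in particular the intermediate recurrence (\ref{app5}), to the $q$-Genocchi case, where the Appell factor is $A_q(t)=G_q(t)=\dfrac{2t}{e_q(t)+1}$. Everything then reduces to identifying the coefficients $\alpha_n$ attached to this particular $A_q$, after which the stated identity drops out by collecting terms in (\ref{app5}).

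First I would compute $\alpha_n$ from the defining relation (\ref{ass1}), i.e.\ I must evaluate $t\,\dfrac{D_{q,t}G_q(t)}{G_q(qt)}$. Rather than differentiate the quotient directly, I would start from the functional equation $G_q(t)\bigl(e_q(t)+1\bigr)=2t$, apply $D_{q,t}$ via the $q$-product rule $D_q(fg)(t)=f(t)D_qg(t)+g(qt)D_qf(t)$ together with $D_{q,t}e_q(t)=e_q(t)$, and obtain $G_q(t)e_q(t)+\bigl(e_q(qt)+1\bigr)D_{q,t}G_q(t)=2$. Substituting $e_q(qt)+1=2qt/G_q(qt)$ (the same functional equation evaluated at $qt$) yields the clean expression $t\,\dfrac{D_{q,t}G_q(t)}{G_q(qt)}=\dfrac{2-G_q(t)e_q(t)}{2q}$. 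Finally, using $G_q(t)e_q(t)=2t-G_q(t)$ (again from the functional equation) converts this to $\dfrac{1}{q}-\dfrac{t}{q}+\dfrac{1}{2q}G_q(t)$, and reading off coefficients of $t^n/[n]_q!$ gives $\alpha_0=1/q$, $\alpha_1=-1/(2q)$ (using $g_{0,q}=0$, $g_{1,q}=1$), and $\alpha_n=g_{n,q}/(2q)$ for $n\ge 2$.

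With the $\alpha_n$ in hand, I would substitute them into (\ref{app5}) after re-indexing its sum via $k\mapsto n-k$, so that the right-hand side reads $\sum_{k=0}^{n}\binom{n}{k}_q\alpha_{n-k}q^{k}G_{k,q}(x)+x[n]_q q^{n}G_{n-1,q}(x)$. The terms $k=n$ and $k=n-1$ isolate $\alpha_0$ and $\alpha_1$: the former contributes $q^{n-1}G_{n,q}(x)$, and the latter combines with the $x$-term (since $\binom{n}{n-1}_q=[n]_q$) into $[n]_q\bigl(xq-\tfrac{1}{2q}\bigr)q^{n-1}G_{n-1,q}(x)$. The remaining terms $k\le n-2$ carry $\alpha_{n-k}=g_{n-k,q}/(2q)$ and assemble into $\tfrac{1}{2q}\sum_{k=0}^{n-2}\binom{n}{k}_q g_{n-k,q}q^{k}G_{k,q}(x)$. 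Moving $[n]_q G_{n,q}(qx)$ to the other side produces exactly the claimed identity.

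The main obstacle is the computation of the $\alpha_n$: the quotient $D_{q,t}G_q(t)/G_q(qt)$ looks unwieldy, and the crucial simplifications all hinge on invoking $G_q(t)\bigl(e_q(t)+1\bigr)=2t$ at both arguments $t$ and $qt$, while being careful that the asymmetric $q$-product rule introduces $e_q(qt)$ and $G_q(qt)$ rather than their values at $t$. Once the collapse to $\bigl(2-G_q(t)e_q(t)\bigr)/(2q)$ is achieved, the rest is routine bookkeeping in (\ref{app5}).
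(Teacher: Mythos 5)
Your proposal is correct and follows exactly the route the paper intends: the paper gives no explicit proof of Theorem \ref{Thm:g1}, leaving it as the specialization of formula (\ref{app5}) to $A_q(t)=G_q(t)=2t/(e_q(t)+1)$, and your computation of $\alpha_0=1/q$, $\alpha_1=-1/(2q)$, $\alpha_n=g_{n,q}/(2q)$ for $n\ge2$ via the functional equation $G_q(t)(e_q(t)+1)=2t$ and the $q$-product rule is the missing detail, carried out correctly. The subsequent re-indexing and isolation of the $k=n$ and $k=n-1$ terms reproduces the stated identity exactly.
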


\begin{theorem}
\label{Thm:g2}The $q$-Genocchi polynomials $G_{n,q}\left(  x\right)  $ satisfy
the $q$-difference equation%
\begin{gather*}
\frac{1}{2q}\dfrac{g_{n,q}}{\left[  n\right]  _{q}!}D_{q,x}^{n}G_{n,q}\left(
x\right)  +\dfrac{g_{n-1,q}}{2\left[  n-1\right]  _{q}!}D_{q,x}^{n-1}%
G_{n,q}\left(  x\right)  +...+\dfrac{q^{n-3}g_{2,q}}{2\left[  2\right]  _{q}%
!}D_{q,x}^{2}G_{n,q}\left(  x\right) \\
-\dfrac{q^{n-2}}{2}D_{q,x}G_{n,q}\left(  x\right)  +q^{n-1}G_{n,q}\left(
x\right)  +xq^{n}D_{q,x}G_{n,q}\left(  x\right)  -\left[  n\right]
_{q}G_{n,q}\left(  qx\right)  =0.
\end{gather*}

\end{theorem}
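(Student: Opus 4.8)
The plan is to realize the $q$-Genocchi polynomials as a $q$-Appell sequence and then feed the result of Theorem \ref{Thm:a2} with the appropriate coefficients $\alpha_n$; the only points requiring care are the closed-form evaluation of these $\alpha_n$ and the fact that here the generating series vanishes at the origin. First I would set $A_q(t)=G_q(t)=\frac{2t}{e_q(t)+1}$, so that $A_q(x,t)=G_q(x,t)$ and $A_{n,q}(x)=G_{n,q}(x)$. Differentiating $G_q(x,t)=G_q(t)e_q(tx)$ with respect to $x$ gives $D_{q,x}G_{n,q}(x)=[n]_q G_{n-1,q}(x)$, so the lowering-operator identity (\ref{app6}) holds verbatim for the Genocchi sequence. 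This is exactly what the reduction in the proof of Theorem \ref{Thm:a2} uses, so that step goes through unchanged even though $A_q(0)=g_{0,q}=0$ here: the ratio in assumption (\ref{ass1}) is still a well-defined formal power series because numerator and denominator both vanish to first order at $t=0$.

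The central computation is to put $t\frac{D_{q,t}G_q(t)}{G_q(qt)}=\sum_{n}\alpha_n\frac{t^n}{[n]_q!}$ into closed form. Here I would use two identities. From $D_{q,t}e_q(t)=e_q(t)$ one obtains $e_q(qt)=\bigl(1-(1-q)t\bigr)e_q(t)$, and from the defining relation $G_q(t)\bigl(e_q(t)+1\bigr)=2t$ one obtains $e_q(t)G_q(t)=2t-G_q(t)$. Computing $D_{q,t}G_q(t)$ from $G_q(t)(e_q(t)+1)=2t$ by the $q$-product rule (the same form $D_q(fg)=(D_qf)g+(S_qf)D_qg$ used in the proof of Theorem \ref{Thm:a2}) and then substituting both identities, the apparently messy quotient collapses to $t\frac{D_{q,t}G_q(t)}{G_q(qt)}=\frac{1}{q}-\frac{1}{2q}e_q(t)G_q(t)=\frac{1-t}{q}+\frac{1}{2q}G_q(t)$. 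Reading off the coefficient of $t^n/[n]_q!$ then gives $\alpha_0=\frac{1}{q}$, $\alpha_1=-\frac{1}{2q}$, and $\alpha_n=\frac{g_{n,q}}{2q}$ for $n\ge 2$.

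Finally I would substitute these values into the difference equation of Theorem \ref{Thm:a2}, where the coefficient of $D_{q,x}^{k}G_{n,q}(x)$ is $\frac{q^{n-k}\alpha_k}{[k]_q!}$. For $2\le k\le n$ this yields the terms $\frac{q^{n-k}g_{k,q}}{2q[k]_q!}D_{q,x}^{k}G_{n,q}(x)$ displayed in the statement, the substitutions $\alpha_1=-\frac{1}{2q}$ and $\alpha_0=\frac{1}{q}$ produce the special terms $-\frac{q^{n-2}}{2}D_{q,x}G_{n,q}(x)$ and $q^{n-1}G_{n,q}(x)$, and the terms $xq^{n}D_{q,x}G_{n,q}(x)-[n]_q G_{n,q}(qx)$ are the ones common to every $q$-Appell family. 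I expect the main obstacle to be precisely the closed-form evaluation of the $\alpha_n$: the two identities must be combined in exactly the right way for the quotient to telescope, and one must observe that the vanishing of $g_{0,q}$ together with $g_{1,q}=1$ is what breaks the naive pattern $\alpha_k=g_{k,q}/(2q)$ at the two lowest orders and thereby accounts for the anomalous low-order terms of the equation. Equivalently, equating coefficients before invoking (\ref{app6}) reproduces Theorem \ref{Thm:g1}, from which Theorem \ref{Thm:g2} follows at once by the lowering relation.
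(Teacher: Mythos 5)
Your proposal is correct and follows exactly the route the paper intends (it leaves the proof of Theorem \ref{Thm:g2} implicit as a specialization of Theorem \ref{Thm:a2}): your closed form $t\,D_{q,t}G_{q}(t)/G_{q}(qt)=\frac{1-t}{q}+\frac{1}{2q}G_{q}(t)$ checks out, giving $\alpha_{0}=\frac{1}{q}$, $\alpha_{1}=-\frac{1}{2q}$, $\alpha_{k}=\frac{g_{k,q}}{2q}$ for $k\geq 2$, which reproduces every term of the stated equation and is consistent with Theorem \ref{Thm:g1}. Your remark about $g_{0,q}=0$, $g_{1,q}=1$ breaking the naive pattern at low order is exactly the right point of care.
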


\section{ $q$-Hermite polynomials}

In this section we construct a $q$-Hermite polynomials and give of their some
properties. Also, we derive the three-term recursive relation as well as the
second-order differential equation obeyed by these new polynomials.

We define new $q$-Hermite polynomials $H_{n,q}\left(  x\right)  $ by means of
the generating function%
\begin{align*}
H_{q}\left(  x,t\right)   &  :=H_{q}\left(  t\right)  e_{q}\left(  tx\right)
=\sum_{n=0}^{\infty}H_{n,q}\left(  x\right)  \dfrac{t^{n}}{\left[  n\right]
_{q}!},\\
H_{q}\left(  t\right)   &  :=\sum_{n=0}^{\infty}\left(  -1\right)
^{n}q^{n\left(  n-1\right)  }\dfrac{t^{2n}}{\left[  2n\right]  _{q}%
!!},\ \ \ \ \left[  2n\right]  _{q}!!=\left[  2n\right]  _{q}\left[
2n-2\right]  _{q}...\left[  2\right]  _{q}.
\end{align*}
It is clear that
\begin{align*}
\lim\limits_{q\rightarrow1^{-}}H_{q}\left(  x,t\right)   &  =\lim
\limits_{q\rightarrow1^{-}}H_{q}\left(  t\right)  e_{q}\left(  tx\right)
=e^{tx}\lim\limits_{q\rightarrow1^{-}}\sum_{n=0}^{\infty}\left(  -1\right)
^{n}q^{n\left(  n-1\right)  }\dfrac{t^{2n}}{\left[  2n\right]  _{q}!!}\\
&  =e^{tx}\lim\limits_{q\rightarrow1^{-}}\sum_{n=0}^{\infty}\left(  -1\right)
^{n}\dfrac{t^{2n}}{\left(  2n\right)  \left(  2n-2\right)  ....2}=e^{tx}%
\lim\limits_{q\rightarrow1^{-}}\sum_{n=0}^{\infty}\left(  -1\right)
^{n}\dfrac{t^{2n}}{2^{n}n!}\\
&  =\exp\left(  tx-\frac{t^{2}}{2}\right)  .
\end{align*}
Moreover%
\[
\dfrac{D_{q,t}H_{q}\left(  t\right)  }{H_{q}\left(  qt\right)  }%
=-t\ \ \ \ \ \text{and\ \ \ }D_{q,x}H_{n,q}\left(  x\right)  =\left[
n\right]  _{q}H_{n-1,q}\left(  x\right)  .
\]

\begin{theorem}
\label{Thm:h0}The series form of the $q$-Hermite polynomial is given by%
\[
H_{n,q}\left(  x\right)  =\sum_{k=0}^{\left[  \frac{n}{2}\right]  }%
\frac{\left(  -1\right)  ^{k}q^{k\left(  k-1\right)  }x^{n-2k}}{\left[
2k\right]  _{q}!!\left[  n-2k\right]  _{q}!}%
\]

\end{theorem}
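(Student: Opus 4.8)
The plan is to read off the coefficient of $t^{n}$ in the defining product $H_{q}(x,t)=H_{q}(t)\,e_{q}(tx)$ by forming the Cauchy product of the two series that enter the definition. First I would expand the two factors separately as
\begin{align*}
H_{q}(t) &= \sum_{j=0}^{\infty}(-1)^{j}q^{j(j-1)}\frac{t^{2j}}{[2j]_{q}!!},\\
e_{q}(tx) &= \sum_{m=0}^{\infty}\frac{x^{m}}{[m]_{q}!}\,t^{m},
\end{align*}
observing that only even powers of $t$ are contributed by the first factor.

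Next I would form the Cauchy product, whose general term is $(-1)^{j}q^{j(j-1)}x^{m}t^{2j+m}/\bigl([2j]_{q}!!\,[m]_{q}!\bigr)$, and then collect the contributions for which $2j+m=n$. Writing $m=n-2j$ forces $0\le j\le\left[\frac{n}{2}\right]$ in order that $m\ge0$, so the coefficient of $t^{n}$ in $H_{q}(x,t)$ works out to
\[
\sum_{j=0}^{\left[\frac{n}{2}\right]}(-1)^{j}q^{j(j-1)}\frac{x^{n-2j}}{[2j]_{q}!!\,[n-2j]_{q}!}.
\]
Matching this against the coefficient of $t^{n}$ on the left-hand side of the identity $\sum_{n}H_{n,q}(x)\,t^{n}/[n]_{q}!=H_{q}(x,t)$ and solving for $H_{n,q}(x)$ then produces the closed form asserted for the polynomial.

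The step requiring the most care is the bookkeeping of the two different factorials that appear — the double factorial $[2j]_{q}!!$ inherited from $H_{q}(t)$ and the ordinary $q$-factorial $[m]_{q}!$ inherited from $e_{q}(tx)$ — together with the normalizing factor $[n]_{q}!$ carried by the coefficient of $t^{n}$ on the left. I would keep these three quantities apart until the final line, since a stray factor of $[n]_{q}!$ is the easiest thing to misplace in this computation. As a consistency check I would let $q\to1^{-}$, where $[2j]_{q}!!\to 2^{j}j!$ and $[n]_{q}!\to n!$, and confirm that the series collapses to the classical Hermite expansion compatible with the limiting generating function $\exp\bigl(tx-\tfrac{t^{2}}{2}\bigr)$ computed earlier. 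Should the direct expansion prove unwieldy, an alternative is induction on $n$ based on the lowering relation $D_{q,x}H_{n,q}(x)=[n]_{q}H_{n-1,q}(x)$ recorded in the definition, but the Cauchy-product argument is the most direct route.
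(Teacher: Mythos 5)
Your Cauchy-product argument is exactly the paper's proof: it expands $H_q(t)$ and $e_q(tx)$ as series, multiplies, and collects the terms with $2k+l=n$. So the approach is the same and the computation is sound.

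One remark worth making, precisely because you flagged the $[n]_q!$ bookkeeping as the delicate point: if you carry it through honestly, matching the coefficient of $t^n$ in the product against $H_{n,q}(x)/[n]_q!$ gives
\[
H_{n,q}(x)=[n]_q!\sum_{k=0}^{\left[\frac{n}{2}\right]}\frac{(-1)^{k}q^{k(k-1)}x^{n-2k}}{[2k]_{q}!!\,[n-2k]_{q}!},
\]
i.e.\ the formula as stated in the theorem is off by a factor of $[n]_q!$ (check $n=2$: the stated sum gives $(x^2-1)/[2]_q$, whereas the paper's own list has $H_{2,q}(x)=x^2-1$). The paper's proof silently makes the same identification without dividing out the $[n]_q!$, so the slip is in the source, not in your method; but your write-up should not claim that "solving for $H_{n,q}(x)$ produces the closed form asserted" — it produces the corrected form above.
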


\begin{proof}
Indeed, expanding the generation function $H_{n,q}\left(  x,t\right)  $, we
have%
\begin{align*}
H_{q}\left(  x,t\right)   &  =\sum_{k=0}^{\infty}\left(  -1\right)
^{k}q^{k\left(  k-1\right)  }\dfrac{t^{2k}}{\left[  2k\right]  _{q}!!}%
\sum_{l=0}^{\infty}x^{l}\dfrac{t^{l}}{\left[  l\right]  _{q}!}\\
&  =\sum_{n=0}^{\infty}\sum_{l=0}^{\infty}\frac{\left(  -1\right)
^{k}q^{k\left(  k-1\right)  }x^{l}}{\left[  2k\right]  _{q}!!\left[  l\right]
_{q}!}t^{2k+l}\ \ \ (2k+l=n)\\
&  =\sum_{n=0}^{\infty}\sum_{k=0}^{\left[  \frac{n}{2}\right]  }\frac{\left(
-1\right)  ^{k}q^{k\left(  k-1\right)  }x^{n-2k}}{\left[  2k\right]
_{q}!!\left[  n-2k\right]  _{q}!}t^{n}.
\end{align*}

\end{proof}

\begin{theorem}
\label{Thm:h1}The following linear homogeneous recurrence relation for the
$q$-Hermite polynomials holds true:%
\begin{equation}
H_{n,q}\left(  qx\right)  =xq^{n}H_{n-1,q}\left(  x\right)  -\left[
n-1\right]  _{q}q^{n-2}H_{n-2,q}\left(  x\right)  ,\ \ \ n\geq2. \label{hh1}%
\end{equation}

\end{theorem}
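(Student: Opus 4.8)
The plan is to recognize the $q$-Hermite polynomials as a special instance of the $q$-Appell family obtained by taking $A_q(t) := H_q(t)$, so that the master recurrence established inside the proof of Theorem~\ref{Thm:a2}, namely formula~(\ref{app5}), applies verbatim once the sequence $\alpha_n$ attached to $H_q$ has been identified. In this way the entire argument reduces to computing the $\alpha_n$ for this particular generating function and then reading off which terms survive.

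First I would determine the $\alpha_n$ directly from their defining relation~(\ref{ass1}). By the stated property of the $q$-Hermite generating function we have $D_{q,t}H_q(t)/H_q(qt) = -t$, and therefore
\[
t\,\frac{D_{q,t}H_q(t)}{H_q(qt)} = -t^2 = \sum_{n=0}^{\infty}\alpha_n\frac{t^n}{[n]_q!}.
\]
Comparing coefficients of $t^n$ then forces $\alpha_2 = -[2]_q!$ and $\alpha_n = 0$ for every $n\neq 2$. This is the decisive simplification: a single nonzero coefficient collapses the otherwise full convolution sum to just one term.

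Next I would substitute these values into~(\ref{app5}) with $A_{n,q}=H_{n,q}$. On the right-hand side the sum $\sum_{k=0}^{n}\binom{n}{k}_q \alpha_k q^{n-k} H_{n-k,q}(x)$ retains only its $k=2$ contribution, namely $\binom{n}{2}_q(-[2]_q!)\,q^{n-2}H_{n-2,q}(x)$, while the transport term $x[n]_q q^n H_{n-1,q}(x)$ is carried over unchanged. Using $\binom{n}{2}_q=[n]_q[n-1]_q/[2]_q!$ turns the first term into $-[n]_q[n-1]_q\,q^{n-2}H_{n-2,q}(x)$, whereupon dividing through by $[n]_q$ yields exactly~(\ref{hh1}).

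I do not expect a genuine obstacle here, since the lowering-operator machinery is already packaged in Theorem~\ref{Thm:a2} and only needs to be specialized. The one place deserving care is the bookkeeping of the powers of $q$ together with the cancellation of the $[2]_q!$ factor against the $q$-binomial coefficient, and checking that the restriction $n\ge 2$ is precisely what guarantees that the index $k=2$ (hence the term $H_{n-2,q}(x)$) is meaningful.
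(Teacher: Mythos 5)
Your proof is correct and follows exactly the route the paper intends: the paper proves its recurrences by specializing formula (\ref{app5}) to each polynomial family, and for the $q$-Hermite case the identity $D_{q,t}H_q(t)/H_q(qt)=-t$ stated just before Theorem \ref{Thm:h0} gives $\alpha_2=-[2]_q!$ and $\alpha_n=0$ otherwise, which is precisely your computation. The cancellation of $[2]_q!$ against $\left[\begin{smallmatrix}n\\2\end{smallmatrix}\right]_q$ and the division by $[n]_q$ are carried out correctly, so nothing further is needed.
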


Using the recurrence relation (\ref{hh2}), we get%
\begin{align*}
H_{0,q}\left(  x\right)   &  =1,\ \text{(by definition)}\\
H_{1,q}\left(  x\right)   &  =x,\\
H_{2,q}\left(  x\right)   &  =x^{2}-1,\\
H_{3,q}\left(  x\right)   &  =x^{3}-\left[  3\right]  _{q}x,\\
H_{4,q}\left(  x\right)   &  =x^{4}-\left(  1+q^{2}\right)  \left[  3\right]
_{q}x^{2}+\left[  3\right]  _{q}q^{2}.
\end{align*}

\begin{theorem}
\label{Thm:h2}The $q$-Hermite polynomials $G_{n,q}\left(  x\right)  $ satisfy
the $q$-difference equation%
\begin{equation}
q^{n-2}D_{q,x}^{2}H_{n,q}\left(  x\right)  -xq^{n}D_{q,x}H_{n,q}\left(
x\right)  +\left[  n\right]  _{q}H_{n,q}\left(  qx\right)  =0. \label{hh2}%
\end{equation}

\end{theorem}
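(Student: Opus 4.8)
The plan is to derive Theorem~\ref{Thm:h2} as a special case of the general $q$-Appell machinery developed in Theorem~\ref{Thm:a2}, using the specific form of the $q$-Hermite generating function. The $q$-Hermite polynomials are $q$-Appell with $A_q(t)=H_q(t)$, so the whole apparatus of Theorem~\ref{Thm:a2} applies once I identify the coefficients $\alpha_n$ defined by the assumption~(\ref{ass1}). The excerpt already records the crucial simplification for the Hermite case:
\[
\frac{D_{q,t}H_q(t)}{H_q(qt)}=-t,
\]
so that $t\,\dfrac{D_{q,t}H_q(t)}{H_q(qt)}=-t^{2}$. Comparing with $\sum_{n=0}^{\infty}\alpha_n\dfrac{t^n}{[n]_q!}$, I read off that $\alpha_2=-[2]_q!$ and $\alpha_n=0$ for all $n\neq 2$. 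This is the key observation that collapses the general $n$-term difference equation down to the three surviving terms.

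\textbf{Executing the substitution.} With only $\alpha_2$ nonzero, the operator sum $\sum_{k=0}^{n}\dfrac{q^{n-k}}{[k]_q!}\alpha_k D_{q,x}^{k}$ from the final line of the proof of Theorem~\ref{Thm:a2} reduces to the single term $k=2$, namely
\[
\frac{q^{n-2}}{[2]_q!}\,\alpha_2\,D_{q,x}^{2}=\frac{q^{n-2}}{[2]_q!}\bigl(-[2]_q!\bigr)D_{q,x}^{2}=-q^{n-2}D_{q,x}^{2}.
\]
The general difference equation then states
\[
[n]_q H_{n,q}(qx)=\Bigl(-q^{n-2}D_{q,x}^{2}+xq^{n}D_{q,x}\Bigr)H_{n,q}(x),
\]
which, after moving everything to one side, is precisely~(\ref{hh2}) up to an overall sign. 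I would present this directly, pointing out that the higher-order operators $D_{q,x}^{k}$ for $k\ge 3$ and the constant term ($k=0,1$) all vanish because the corresponding $\alpha_k$ are zero.

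\textbf{Verifying the auxiliary identity.} The one genuine computation is confirming $\dfrac{D_{q,t}H_q(t)}{H_q(qt)}=-t$, which the excerpt asserts but I should check. Writing $H_q(t)=\sum_{n=0}^{\infty}(-1)^n q^{n(n-1)}\dfrac{t^{2n}}{[2n]_q!!}$, one applies $D_{q,t}$ termwise using $D_{q,t}t^{2n}=[2n]_q t^{2n-1}$, and separately expands $H_q(qt)$ by scaling $t\mapsto qt$. The claim then amounts to a term-by-term matching after recognizing $[2n]_q/[2n]_q!!=1/[2n-2]_q!!$ and tracking the powers of $q$; the index shift $n\mapsto n+1$ in one of the series is what produces the clean factor $-t$. \textbf{The main obstacle}—such as it is—lies here: the $q$-double-factorial bookkeeping and the powers $q^{n(n-1)}$ versus the scaling factor $q^{2n}$ from $H_q(qt)$ must be reconciled carefully, since an error in the exponent of $q$ would corrupt the value of $\alpha_2$ and hence the coefficient $q^{n-2}$ in the final equation. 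Once that identity is secured, the theorem follows immediately with no further work, so the proof is essentially a one-line appeal to Theorem~\ref{Thm:a2}.
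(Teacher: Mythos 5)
Your proposal is correct and follows exactly the route the paper intends: the paper states Theorem~\ref{Thm:h2} without an explicit proof, but its stated identity $D_{q,t}H_q(t)/H_q(qt)=-t$ is precisely the input needed to read off $\alpha_2=-[2]_q!$ and $\alpha_k=0$ otherwise, after which specializing Theorem~\ref{Thm:a2} collapses to equation~(\ref{hh2}) just as you describe. Your independent verification of the auxiliary identity via the index shift and the cancellation $[2n]_q/[2n]_q!!=1/[2n-2]_q!!$ is sound, so no gap remains.
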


In the limit when $q\rightarrow1^{-}$, the equation (\ref{hh2}) is reduced to
the second order differential equation satisfied by the Hermite polynomials.

\bigskip

\bigskip

\end{document}